\documentclass[reqno]{amsart}

\usepackage{amsmath}
\usepackage{amsfonts}
\usepackage{amsthm}
\usepackage{amssymb}
\usepackage{graphicx}
\usepackage{graphics}
\usepackage{bm}
\usepackage{dsfont}
\usepackage{color}
\usepackage{enumerate}
\usepackage[font=footnotesize]{caption}
\usepackage[all]{xy}
\usepackage{xypic}
\allowdisplaybreaks

\topmargin=-1cm
\oddsidemargin=-0.3cm
\evensidemargin=-0.3cm
\textwidth=16.6cm
\textheight=23cm

\numberwithin{equation}{section}
% Theorems -------------------------------------------------------

%\newtheoremstyle{personal}%
%{12pt}%      Space above
%{12pt}%      Space below
%{\slshape}%         Body font
%{}%         Indent amount
%{\bfseries}% Theorem head font
%{.}%        Punctuation after theorem head
%{.5em}%     Space after theorem head
%{}%         Theorem head spec (can be left empty, meaning "normal")
%\theoremstyle{personal}%
\newtheorem{thm}{Theorem}[section]
\newtheorem{cor}[thm]{Corollary}
\newtheorem{lem}[thm]{Lemma}
\newtheorem{prop}[thm]{Proposition}
\theoremstyle{definition}
\newtheorem{rem}[thm]{Remark}

\newtheorem{rmk}[thm]{Remark}
%Some macros

\newcommand{\N}{\mathds{N}}

\newcommand{\Z}{\mathds{Z}}

\newcommand{\R}{\mathds{R}}

\newcommand{\diff}{\mathrm{d}}

\begin{document}

\title[A note on the Morse homology in Banach manifolds]{A note on the Morse homology for a class of functionals\\ in Banach spaces involving the $p$-Laplacian}

\author[L. Asselle]{Luca Asselle}
\address{Ruhr-Universit\"at Bochum, Universit\"atsstra\ss e 150, 44801, Bochum, Germany}
\email{luca.asselle@math.uni-giessen.de}

\author[M. Starostka]{Maciej Starostka}
\address{Gda\'nks University of Technology, Gabriela Narutowicza 11/12, 80233 Gda\'nsk, Poland
\newline
\indent Institut f\"ur Mathematik, Naturwissenschaftliche Fakult\"at II, Martin-Luther-Universit\"at Halle-Wittenberg, \newline 
\indent 06099 Halle (Saale), Germany}
\email{maciejstarostka@pg.edu.pl}

\date{\today}
\subjclass[2000]{to be determined.}
\keywords{Morse homology, ...}

\begin{abstract}
In this paper we show how to construct Morse homology for an explicit class of functionals involving the $p$-Laplacian. 
The natural domain of definition of such functionals is the Banach space $W^{1,2p}_0(\Omega)$, where $p>n/2$ and $\Omega \subset \R^n$ is a bounded domain with sufficiently smooth boundary. As $W^{1,2p}_0(\Omega)$ is not isomorphic to its dual space,
critical points of such functionals cannot be non-degenerate in the usual sense, and hence in the construction of Morse homology we only require that the second differential at each critical point be injective. 
Our result upgrades the results in \cite{Cingolani:2003,Cingolani:2007}, where critical groups for an analogous class of functionals are computed, and provides in this special case a positive answer to Smale's suggestion that injectivity of the second differential should be enough for Morse theory. 
\end{abstract}

\maketitle

%\textbf{Data availability statement:} Data sharing not applicable to this article as no datasets were generated or analyzed during the current study.

%\vspace{2mm}

%\textbf{Conflict of interests statement:} On behalf of all authors, the corresponding author states that there is no conflict of interest. 

%%%%%%%%%%%%
%%%%%%%%%%%%
%%%%%%%%%%%%

\section{Introduction}

Let $n\geq 2$. For $\Omega \subset \R^n$ bounded domain with sufficiently regular boundary, and for $p>n/2$, we consider the functional 
\begin{equation}
f: X:= W^{1,2p}_0(\Omega) \to \R, \quad f(u) := \frac 1{2p}  \int_{\Omega} \big (1+ |\nabla u|^2)^p\, \diff x + \int_\Omega G(u)\, \diff x,
\label{eq:functional}
\end{equation}
where $G:\R\to \R$ is a function of class $C^2$ such that
\begin{equation}
|G(t)| \leq \beta |t|^\alpha + \delta,  \quad \forall t\in \R,
\label{eq:growthG}
\end{equation}
for some  $\alpha \in [0,2p)$ and $\beta,\delta \geq 0$. 

Condition \eqref{eq:growthG} is needed to ensure that the functional  $f$ in \eqref{eq:functional} satisfies the Palais-Smale condition, a crucial property to do global critical point theory in infinite dimension. If one is interested in the computation of critical groups only, then such a condition can be removed, see \cite{Cingolani:2003} and references therein, where an analogous class of functionals is considered (we refer to the discussion after the statement of the main theorem for more details). 
Also, in this paper we are not interested in finding sharp conditions under which Morse homology can be defined. In this sense, the growth Condition \eqref{eq:growthG} as well as the condition $p>n/2$ can surely be relaxed. In the latter case however, one would have to face additional transversality issues, the reason being the fact that $W^{1,2p}_0(\Omega)$ admits $C^2$-smooth (actually smooth for $p=1$) bump functions only if $p\geq 1$. 

In this paper we show that Morse homology for functionals as in \eqref{eq:functional} is well-defined provided that all critical points $\bar u$ of $f$ are non-degenerate in the sense that the second differential of $f$ at $\bar u$ defines an injective linear operator $\mathrm{d}^2 f(\bar u):X \to X^*$. We shall stress the fact that such a condition is in general not enough to construct Morse homology (actually, not even to compute critical groups), as it does not even imply that the critical points are isolated, see e.g. \cite{Tromba:1977}.

\begin{thm}
Let $f:X\to \R$ be a functional as in \eqref{eq:functional} such that all critical points of $f$ are non-degenerate in the sense that the second differential $\mathrm d^2 f(\bar u): X\to X^*$ is 
injective for all $\bar u \in \mathrm{crit}\, (f)$. Then, Morse homology with $\Z_2$-coefficients for $f$ is well-defined and isomorphic to the singular homology of $X$, i.e. 
$$HM_* (f;\Z_2) \cong H_*(X;\Z_2) \cong \left \{\begin{array}{r} \Z_2 \quad \quad *=0, \\ 0 \ \quad \quad *\geq 1.\end{array}\right.$$
In particular, it is independent of $p>n/2$. 
\label{thm:main}
\end{thm}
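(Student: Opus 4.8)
The plan is to carry out the standard construction of Morse homology via a negative gradient flow, the one genuine difficulty being that $X$ is reflexive but not isomorphic to its dual, so that the hyperbolicity of the critical points for the flow — which is what the construction really needs, rather than invertibility of $\mathrm d^2 f(\bar u)$ — must be recovered by a regularity bootstrap.

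First I would record the global properties of $f$. Since $2p>n$ one has $X\hookrightarrow C^0(\overline\Omega)$; combined with \eqref{eq:growthG} this shows that $f$ is of class $C^2$ on $X$, bounded below and coercive (the leading term controls $\|u\|_X^{2p}$ while $G$ contributes at most a multiple of $\|u\|_X^\alpha$ with $\alpha<2p$), and that $f$ satisfies the Palais--Smale condition. Elliptic regularity for the quasilinear Euler--Lagrange equation $-\mathrm{div}\big((1+|\nabla u|^2)^{p-1}\nabla u\big)+G'(u)=0$ — uniformly elliptic with smooth structure function since $p>1$ — together with the a priori bound on $\mathrm{crit}(f)$ coming from coercivity and Palais--Smale, shows that $\mathrm{crit}(f)$ is compact in $C^{1,\alpha}(\overline\Omega)$ and the set of critical values is discrete.

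The heart of the matter is the local picture at a critical point $\bar u$, and here I would follow (and slightly extend) the analysis of \cite{Cingolani:2003,Cingolani:2007}. As $\bar u\in C^{1,\alpha}$, the quadratic form $q(v):=\int_\Omega\big[(1+|\nabla\bar u|^2)^{p-1}|\nabla v|^2+2(p-1)(1+|\nabla\bar u|^2)^{p-2}(\nabla\bar u\cdot\nabla v)^2\big]\,\diff x$ is equivalent to the $W^{1,2}_0$-norm, so its completion $H$ is isomorphic to $W^{1,2}_0(\Omega)$, the inclusion $X\hookrightarrow H$ is continuous and dense, and on $H$ one has $\mathrm d^2f(\bar u)[v,w]=\langle v,w\rangle_H+\langle Kv,w\rangle_H$ with $K$ self-adjoint and compact (it is induced by the bounded multiplication by $G''(\bar u)$ through the compact embedding $H\hookrightarrow L^2$). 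By elliptic regularity $\ker(\mathrm{Id}+K)\subset C^{1,\alpha}\subset X$, so the standing hypothesis forces $\ker(\mathrm{Id}+K)=0$; thus $\mathrm{Id}+K$ is invertible on $H$ with a finite-dimensional negative eigenspace, whose dimension I take as the Morse index $m(\bar u)<\infty$. This is precisely the point at which Smale's suggestion is vindicated: although $\mathrm d^2f(\bar u):X\to X^*$ is merely injective — it cannot be onto, since $X\not\cong X^*$ — the dynamics it governs is genuinely hyperbolic, being in reality generated by the self-adjoint elliptic operator $-\mathrm{div}(B_{\bar u}\nabla\cdot)+G''(\bar u)$, whose spectrum is real, discrete, and misses $0$. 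Concretely I would use the $L^2$-gradient semiflow $\partial_t u=\mathrm{div}\big((1+|\nabla u|^2)^{p-1}\nabla u\big)-G'(u)$, which by quasilinear parabolic theory is a smoothing semiflow on $X$ along which $f$ is a strict Lyapunov function and every orbit converges, by Palais--Smale, to a critical point; the linearization above then yields, at each critical point, local stable and unstable manifolds that are $C^1$-submanifolds of $X$ of codimension respectively dimension $m(\bar u)$, the semiflow restricting to a genuine flow on the finite-dimensional unstable manifolds.

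From here the Morse complex is assembled as usual. Transversality of the intersections $W^u(\bar u)\cap W^s(\bar v)$ is obtained by perturbing $f$ with small functions supported near the critical points — here one uses $p>n/2$, hence $p>1$, so that $X$ admits $C^2$ bump functions — and the resulting spaces $\mathcal M(\bar u,\bar v)$ of unparametrised connecting orbits are then manifolds of dimension $m(\bar u)-m(\bar v)-1$, precompact and admitting the usual broken-trajectory compactification by Palais--Smale and parabolic a priori estimates. Counting isolated connecting orbits mod $2$ defines a boundary operator on $C_*(f;\Z_2):=\bigoplus_{m(\bar u)=*}\Z_2\langle\bar u\rangle$, with $\partial^2=0$ coming from the description of the boundary of the one-dimensional moduli spaces together with a gluing argument, and continuation maps show $HM_*(f;\Z_2)$ is independent of the auxiliary choices. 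Finally, the identification with singular homology is the standard Morse-theoretic argument: being $C^2$, bounded below, coercive, Palais--Smale and Morse--Smale, $f$ is such that each sublevel set $\{f\le b\}$ is obtained up to homotopy from $\{f\le a\}$ (for regular values $a<b$) by attaching one cell of dimension $m(\bar u)$ for each critical point with $a<f(\bar u)<b$; passing to the direct limit as $b\to\infty$, and using that $X$ is contractible, gives $HM_*(f;\Z_2)\cong H_*(X;\Z_2)$, which is $\Z_2$ for $*=0$ and $0$ otherwise, whence also the independence of $p$. The main obstacle throughout is the one already flagged — running flow theory, transversality and compactness on the non-Hilbertian space $X$ — and it is overcome by exploiting parabolic smoothing to confine all the relevant dynamics, near critical points and along connecting orbits, to a class of regular functions on which the linearised problem is the benign self-adjoint elliptic one.
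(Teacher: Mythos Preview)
Your proposal takes a genuinely different route from the paper. The paper does \emph{not} use the $L^2$-gradient parabolic semiflow; instead it constructs, by hand, a $C^2$ pseudo-gradient \emph{vector field} $F$ on $X$ (an ODE, not a PDE) and then invokes the abstract Morse-complex machinery of \cite{Abbondandolo:2006lk} for flows on Banach manifolds. The crucial local step is Proposition~\ref{prop:hyperbolic}: near a critical point $\bar u$ one writes $X=\mathbb H^-\oplus W$ with $\mathbb H^-$ the (finite-dimensional, regular) negative eigenspace of the extended Hessian $H_{\bar u}=\mathrm{id}+K$ on $\mathbb H_{\bar u}$ and $W=\mathbb H^+\cap X$, and takes the explicit linear hyperbolic operator $L=(\mathrm{id},-\mathrm{id})$ in this splitting. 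That $f$ is a Lyapunov function for the linear flow $e^{tL}$ is then deduced from the ``uniform convexity'' Lemma~\ref{lem:uniformconvexity}, which shows $\mathrm d^2 f(u)[v,v]\ge \mu\|v\|_{\bar u}^2$ for all $v\in W$, uniformly for $u$ near $\bar u$; this lemma (proved via Ioffe's sequential lower-semicontinuity theorem) is the technical heart of the paper and has no counterpart in your sketch. The global $F$ is patched together by a $C^2$ partition of unity (available since $p>n/2$), renormalised to be bounded and hence complete, and finally perturbed to be Morse--Smale up to order two. The isomorphism with singular homology is then a direct citation of \cite[Theorem~2.8]{Abbondandolo:2006lk}.

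Your parabolic approach is conceptually reasonable and shares with the paper the Hilbert-space linearisation at $\bar u$ and the regularity bootstrap placing the negative eigenspace inside $X$, but it is not cost-free: you would need well-posedness and smoothing of the quasilinear parabolic problem as a semiflow on (a scale containing) $X=W^{1,2p}_0$, and you would be working in a \emph{semiflow} framework rather than the flow framework of \cite{Abbondandolo:2006lk}, so stable/unstable manifolds, transversality, compactness of moduli spaces and gluing all require a separate infrastructure. The paper's choice buys simplicity---after Lemma~\ref{lem:uniformconvexity} everything is soft ODE theory on $X$---at the price of an ad hoc pseudo-gradient; your choice would give a canonical flow at the price of heavier PDE input. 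Neither approach is wrong, but you should be aware that what you outline is a different proof, and that the step you gloss as ``by quasilinear parabolic theory'' is precisely the work the paper is designed to avoid.
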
 

To our best knowledge, the theorem above represents the first concrete instance in which Morse homology in a Banach space setting is defined. For an abstract construction we refer to \cite{Abbondandolo:2006lk}. Functionals  as in \eqref{eq:functional} are interesting for at least two reasons: first, they are intimately 
related with the class of functionals considered in \cite{Cingolani:2003,Cingolani:2007}, whose critical points correspond to weak solutions of a quasi-linear problem (involving the $p$-Laplacian) 
which arises in the mathematical description of propagation phenomena of solitary waves. In fact, for this latter class of functionals the construction of Morse homology carries over word by word. 
Second, they are similar to the class of functionals of $\alpha$-harmonic maps, $\alpha>1$,  introduced in \cite{Sacks:1981} to prove the existence of harmonic maps by studying the convergence of $\alpha$-harmonic maps as $\alpha \downarrow 1$. In fact, our approach should allow to define Morse homology for such class of functionals as well.  

\vspace{2mm}

The strategy of the proof of Theorem \ref{thm:main} is the following: arguing as in \cite{Cingolani:2003,Cingolani:2007} one sees that the critical points of $f$ are isolated and have 
finite Morse index. Also, the growth Condition \eqref{eq:growthG} implies that $f$ satisfies the Palais-Smale condition on $X$. Therefore, in order to apply the abstract theory developed
in \cite{Abbondandolo:2006lk} one has to prove the existence of a $C^2$-smooth complete Morse (i.e. with only hyperbolic rest points) vector field $F$ on $X$ such that $f$ is a 
Lyapounov function for the flow induced by $F$,
see Section 2. The key ingredient here is a sort of uniform convexity of $f$ in the positive direction determined by the second differential $\mathrm{d}^2f $ at each critical point, see Lemma \ref{lem:uniformconvexity}. It would be interesting to check such a condition in other concrete examples. 

The fact that $f$ is a Lyapounov function for $F$ together with the fact that all critical points have finite Morse index (such a condition can be relaxed, see \cite[Theorem 1.20]{Abbondandolo:2006lk}) implies that the stable resp. unstable manifold $W^s(\bar u,F)$ resp. $W^u(\bar u,F)$ of a rest point $\bar u$ of $F$ (equivalently, of a critical point $\bar u$ of $f$) is an embedded $C^2$-submanifold homeomorphic to an open disc.  Since critical points of $f$ have finite Morse index and the pair $(f,F)$ 
satisfies the Palais-Smale condition, after a generic perturbation of 
$F$ we can achieve transverse intersection between stable and unstable manifolds of pairs of critical points whose Morse indices differ at most by two. Such intersections are therefore
finite dimensional pre-compact embedded submanifolds of $X$ of dimension equal the difference of the Morse indices. Now one argues as usually to define a Morse complex which is generated by critical points of $f$ and whose boundary operator counts the number of gradient flow lines (modulo two) between pairs of critical points whose Morse indices differ by one. For more details about the abstract construction of the Morse complex we refer to \cite[Section 2]{Abbondandolo:2006lk}. 

\begin{cor}
Let $f$ be as in Theorem \ref{thm:main}. Then, $f$ has either one or at least three critical points. 
\end{cor}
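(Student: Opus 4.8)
The plan is to derive the statement purely from the algebraic structure of the Morse complex provided by Theorem \ref{thm:main}, without any further analysis of the functional. Recall from that theorem (and from the construction sketched above) that the Morse complex $(C_*(f),\partial)$ is a well-defined chain complex of $\Z_2$-vector spaces, with $C_k(f)$ the span of the critical points of $f$ of Morse index $k$, and that its homology satisfies $HM_*(f;\Z_2)\cong H_*(X;\Z_2)$, which is $\Z_2$ in degree $0$ and $0$ in every other degree.

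First I would observe that $\crit(f)\neq\emptyset$: otherwise $C_*(f)$ would be the zero complex and hence $HM_*(f;\Z_2)=0$, contradicting $HM_0(f;\Z_2)\cong\Z_2$. (Equivalently, one may note that $f$ is coercive and bounded below --- the first term of $f$ dominates a multiple of $\|u\|_X^{2p}$ by the Poincar\'e inequality, while the growth condition \eqref{eq:growthG} with $\alpha<2p$ makes the $G$-term of strictly lower order --- so that, the Palais--Smale condition being in force, $\inf_X f$ is a critical value.)

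Next I would exclude the case in which $f$ has exactly two critical points. Suppose $f$ had exactly two critical points $\bar u_1,\bar u_2$, of Morse indices $k_1\leq k_2$. Then $C_*(f)$ is a bounded $\Z_2$-chain complex with $\dim_{\Z_2}\bigoplus_k C_k(f)=2$ and all $C_k(f)$ finite-dimensional, so its Euler characteristic $\chi=\sum_k(-1)^k\dim_{\Z_2}C_k(f)$ equals $2(-1)^{k_1}$ when $k_1=k_2$ and $(-1)^{k_1}+(-1)^{k_2}$ when $k_1<k_2$; in both cases $\chi\in\{-2,0,2\}$. On the other hand, the Euler characteristic of a bounded complex of finite-dimensional vector spaces equals that of its homology, so $\chi=\sum_k(-1)^k\dim_{\Z_2}HM_k(f;\Z_2)=\dim_{\Z_2}H_0(X;\Z_2)=1$, a contradiction. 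Hence $f$ has either exactly one critical point or at least three, the case of infinitely many critical points being trivially of the latter type.

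I do not expect any genuine obstacle here: the argument is the classical Morse-inequality / Euler-characteristic count, and all of its ingredients --- the existence of the Morse complex, the finiteness of the Morse indices, and the isomorphism with $H_*(X;\Z_2)$ --- are already granted by Theorem \ref{thm:main}. The only point deserving an explicit word is that every $C_k(f)$ is finite-dimensional, which under the hypothesis that $\crit(f)$ has two elements is immediate.
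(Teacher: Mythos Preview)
Your argument is correct and follows essentially the same route as the paper: both deduce the result directly from the Morse complex of Theorem~\ref{thm:main}, first obtaining at least one critical point and then ruling out the case of exactly two. Your Euler-characteristic computation is simply a precise formalization of the paper's one-line remark that a second critical point would have to ``cancel out in homology'' with a third.
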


\begin{proof}
This is an immediate consequence of Theorem \ref{thm:main}. The functional $f$ has at least one critical point because it is bounded from below and satisfies the Palais-Smale condition. Also, the presence of a second critical point implies the existence of a third one in order for them to cancel 
out in homology. 
\end{proof}

%\noindent \textbf{Acknowledgments.} We would like to thank Alberto Abbondandolo and Silvia Cingolani for many fruitful discussions. 

%%%%%%%%%%%%
%%%%%%%%%%%%
%%%%%%%%%%%

\section{Construction of Morse homology}

The differential of $f$ as in \eqref{eq:functional} at $u$ is given by 
\begin{equation}
\diff f(u)[v] = \int_\Omega \big (1+|\nabla u|^2\big )^{p-1} \langle \nabla u,\nabla v\rangle \, \diff x + \int_\Omega G'(u) v\, \diff x.
\label{eq:differentialf}
\end{equation}
If $\bar u \in X$ is a critical point of $f$, then the second differential of $f$ at $\bar u$ is 
\begin{align*}
\diff^2 & f(\bar u) [v,w] \\
& = \int_\Omega \big (1+|\nabla \bar u|^2\big )^{p-1} \langle \nabla v,\nabla w\rangle \, \diff x + 2(p-1) \int_\Omega \big (1+|\nabla \bar u|^2)^{p-2} \langle \nabla \bar u,\nabla v\rangle \langle \nabla \bar u,\nabla w\rangle \, \diff x+ \int_\Omega G''(\bar u) vw\, \diff x.
\end{align*}
Hereafter we assume that $\bar u$ is a \textit{non-degenerate} critical point, in the sense that $\diff^2 f (\bar u):X\to X^*$ is injective. As easy examples show, such a condition is in general not sufficient 
to do Morse theory for abstract functionals on Banach manifolds, as it does not even imply that the critical point is isolated, see e.g. \cite{Tromba:1977}. However, for the class of functionals in \eqref{eq:functional}, critical groups can be defined in a similar way to \cite{Cingolani:2003} under such an assumption. In this paper, we upgrade such a result showing that full Morse homology can be defined. 

\begin{rem}
Critical points of  functionals as in \eqref{eq:functional} cannot be non-degenerate in the classical sense (i.e. $\diff^2 f(\bar u)$ isomorphism), since $X$ is not isomorphic to its dual space $X^*$. On the other hand, injectivity of the second differential at an isolated critical point can be obtained by arbitrarily small finite dimensional Marino-Prodi type perturbations, as shown in \cite[Theorem 1.6]{Cingolani:2007}. 
\end{rem}

In the next theorem we construct a $C^2$-smooth complete Morse vector field on $X$ for which $f$ is a Lyapounov function. This is the crucial step in the definition of Morse homology. 

\begin{thm}
Let $f$ be a functional as in \eqref{eq:functional} having only non-degenerate critical points in the sense above. Then, there exists a $C^2$-smooth vector field $F$ on $X$ such that:
\begin{enumerate}
\item[i)] $F$ is complete,
\item[ii)] $f$ is a Lyapounov function for $F$,
\item[iii)] $F$ is Morse, i.e. the Jacobian of $F$ at every critical point $\bar u$ of $f$ is an hyperbolic operator on $T_{\bar u} X$,
\item[iv)] $(f,F)$ satisfies the Palais-Smale condition,
\item[v)] $F$ satisfies the Morse-Smale condition up to order two. 
\label{thm:pseudogradient}
\end{enumerate}
\end{thm}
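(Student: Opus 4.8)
The plan is to construct $F$ by the familiar two-region scheme: on $X\setminus\crit f$ use a standard pseudo-gradient, near each critical point splice in an \emph{explicit} linear model, glue the two by a $C^{2}$ partition of unity (available because $X=W^{1,2p}_{0}(\Omega)$ is separable and, for $p\geq1$, admits $C^{2}$ bump functions), and finally perturb the glued field away from $\crit f$ to obtain the Morse--Smale condition up to order two. On $X\setminus\crit f$ the differential $\diff f(u)\in X^{*}$ is nonzero, so each such $u$ has a \emph{constant} direction $w_{u}\in X$ with $\diff f(u')[w_{u}]<-c_{u}<0$ on a small ball around $u$; patching the $w_{u}$ with a $C^{2}$ partition of unity and normalising (say by a factor $(1+\|\cdot\|_{X})^{-1}$, or by truncation) yields a $C^{2}$ field on $X\setminus\crit f$ for which $f$ is a strict Lyapunov function and whose integral curves do not escape in finite time; this gives ii), and i), iv) away from $\crit f$. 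The real work is the local model.

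Fix $\bar u\in\crit f$. As in \cite{Cingolani:2003,Cingolani:2007}, elliptic regularity gives $\bar u\in C^{1,\alpha}$, so $(1+|\nabla\bar u|^{2})^{p-1}$ is bounded above and below and $\diff^{2}f(\bar u)$ extends to a bounded symmetric form on $H:=H^{1}_{0}(\Omega)$; writing it as $\langle S_{\bar u}\,\cdot,\cdot\rangle_{H}$ with $S_{\bar u}$ bounded self-adjoint, $S_{\bar u}$ is a compact perturbation (the term $\int G''(\bar u)vw$, via $H\hookrightarrow\hookrightarrow L^{2}$) of a positive isomorphism, hence Fredholm of index zero with discrete spectrum, finitely many non-positive eigenvalues, and smooth eigenfunctions. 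The non-degeneracy hypothesis together with elliptic regularity applied to $S_{\bar u}v=0$ forces $0\notin\sigma(S_{\bar u})$, so $S_{\bar u}$ is invertible, the Morse index $m_{\bar u}:=\dim H^{-}_{\bar u}<\infty$, and $H^{-}_{\bar u}\subset X$. Since the eigenfunctions are smooth, a short bootstrap shows that $S_{\bar u}$ restricts to a bounded operator $X\to X$ whose spectrum on $X$ equals its spectrum on $H$; in particular $S_{\bar u}\colon X\to X$ is hyperbolic with unstable subspace $H^{-}_{\bar u}$ of dimension $m_{\bar u}$. Near $\bar u$ I take, in the linear chart centred at $\bar u$,
\[
F_{\bar u}(u):=-S_{\bar u}(u-\bar u),
\]
which is $C^{2}$, has $\bar u$ as its only rest point on a small ball, and has hyperbolic linearisation $DF_{\bar u}(\bar u)=-S_{\bar u}$ of unstable dimension $m_{\bar u}$; this is iii).

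What remains is that $f$ is a Lyapunov function for $F_{\bar u}$ near $\bar u$, i.e. $\diff f(\bar u+v)[S_{\bar u}v]>0$ for $0<\|v\|_{X}$ small. Since $\diff f(\bar u)=0$ and $f\in C^{2}$, $\diff f(\bar u+v)[S_{\bar u}v]=\diff^{2}f(\bar u)[v,S_{\bar u}v]+r(v)=\|S_{\bar u}v\|_{H}^{2}+r(v)$ with $\|S_{\bar u}v\|_{H}^{2}\geq c_{\bar u}\|v\|_{H}^{2}$, so the whole point is the remainder bound $|r(v)|<c_{\bar u}\|v\|_{H}^{2}$. This is the step I expect to be the genuine obstacle: $\diff f$ is only $C^{1}$ as a map into $X^{*}$, not into $H^{*}$ — the natural estimates for $r(v)$ come in the $W^{1,2p}$-norm, while the coercivity of $\diff^{2}f(\bar u)$ is only in the $H^{1}$-norm, and these scale differently, so a bare Taylor estimate $|r(v)|=o(\|v\|_{X}^{2})$ is worthless. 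The way through is exactly the uniform convexity of the principal part $\frac{1}{2p}\int(1+|\nabla u|^{2})^{p}$ of $f$ along the ``positive'' directions selected by $\diff^{2}f(\bar u)$, which is the content of Lemma~\ref{lem:uniformconvexity}: the convexity gain furnished by this term absorbs the genuinely $W^{1,2p}$-sized pieces of $r(v)$, while the remaining pieces are of lower order relative to $\|v\|_{H}$, and combining the two gives $\diff f(\bar u+v)[S_{\bar u}v]\geq\frac{1}{2}c_{\bar u}\|v\|_{H}^{2}>0$ on a small ball — hence ii) near $\bar u$.

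Finally I would glue and upgrade. With a $C^{2}$ partition of unity subordinate to the cover of $X$ by the small balls about the critical points and by $X\setminus\crit f$, the model fields $F_{\bar u}$ are spliced with the pseudo-gradient; on each overlapping annulus all the fields in play satisfy $\diff f(u)[\,\cdot\,]<0$, so any convex combination does too, and the resulting $C^{2}$ field $F$ still has the critical points of $f$ as its only rest points, with $f$ a strict Lyapunov function. Completeness i) and the Palais--Smale condition iv) for the pair $(f,F)$ then follow from $f$ being a Lyapunov function bounded below, from the Palais--Smale condition for $f$ itself (guaranteed by \eqref{eq:growthG}), and from the normalisation of $F$, exactly as in \cite{Abbondandolo:2006lk}. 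Since all critical points have finite Morse index and $(f,F)$ satisfies Palais--Smale, the stable and unstable manifolds of the rest points are $C^{2}$ embedded submanifolds (the unstable ones finite-dimensional), and a generic perturbation of $F$ supported away from $\crit f$ — keeping i)--iv) — makes $W^{u}(\bar u',F)$ meet $W^{s}(\bar u,F)$ transversally whenever $m_{\bar u'}-m_{\bar u}\leq2$, by the Sard--Smale argument of \cite[Section 2]{Abbondandolo:2006lk}; this is v).
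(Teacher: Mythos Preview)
Your global architecture---pseudo-gradient away from $\crit f$, linear model near each $\bar u$, $C^2$ gluing via bump functions, normalisation for completeness, and a generic perturbation for Morse--Smale---matches the paper's proof exactly. The gap is in the choice of local model and, consequently, in the Lyapunov verification you flag as ``the genuine obstacle''.

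You take $F_{\bar u}(u)=-S_{\bar u}(u-\bar u)$ with $S_{\bar u}$ the Hessian operator; the paper instead uses the \emph{reflection} $L(x_-+x_W):=x_--x_W$ along the splitting $X=\mathbb H^-\oplus W$. This is not cosmetic. With the paper's $L$, the identity $\diff f(\bar u+x)[Lx]=\int_0^1\diff^2 f(\bar u+sx)[Lx,x]\,\diff s$ expands as
\[
\int_0^1\Big(\diff^2 f(\bar u+sx)[x_-,x_-]-\diff^2 f(\bar u+sx)[x_W,x_W]+\underbrace{\diff^2 f(\bar u+sx)[x_-,x_W]-\diff^2 f(\bar u+sx)[x_W,x_-]}_{=0}\Big)\diff s,
\]
the cross terms cancelling \emph{identically} by symmetry of the Hessian; Lemma~\ref{lem:uniformconvexity} (and its elementary analogue on the finite-dimensional $\mathbb H^-$) then applies directly to the two diagonal pieces and yields $\diff f(\bar u+x)[Lx]\le -\mu\|x\|_{\bar u}^2$. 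With your $S_{\bar u}$ the same expansion produces cross terms $\diff^2 f(\bar u+sx)[S_{\bar u}x_-,x_W]+\diff^2 f(\bar u+sx)[S_{\bar u}x_W,x_-]$ that do \emph{not} cancel (they vanish only at $s=0$), and diagonal terms such as $\diff^2 f(\bar u+sx)[S_{\bar u}x_W,x_W]$ that are not of the form $\diff^2 f(u)[w,w]$ to which Lemma~\ref{lem:uniformconvexity} speaks. Your sentence ``the convexity gain\ldots absorbs the genuinely $W^{1,2p}$-sized pieces of $r(v)$'' is exactly where the argument has to happen, and it is not there: Lemma~\ref{lem:uniformconvexity} gives only a lower bound on diagonal evaluations in the $\|\cdot\|_{\bar u}$-norm, which neither absorbs $\|\cdot\|_X$-sized remainders nor controls the off-diagonal pieces your choice generates. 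The fix is simply to replace $-S_{\bar u}$ by its sign, i.e.\ by the paper's $L$; then no remainder estimate is needed at all and the Lyapunov step is a two-line computation.
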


\begin{proof}[Proof of Theorem \ref{thm:main}]
We define a chain complex $(C_*(f),\partial )$ by setting 
$$C_k(f) := \bigoplus_{\mu(\bar u)=k} \Z_2 \langle \bar u \rangle,$$
where $\mu(\bar u)$ denotes the Morse index of $\bar u\in \text{crit}\, (f)$, and
$$\partial \bar u := \sum_{\mu (\bar v) = \mu (\bar u) -1} \Big (\big |\mathcal M(\bar u, \bar v)\big |   \ \text{mod} \ 2 \Big ) \cdot \bar v,$$
where $\mathcal M(\bar u,\bar v)$ is the intersection between the unstable manifold $W^u(\bar u, F)$ of $\bar u$ and the stable manifold $W^s(\bar v,F)$ of $\bar v$. 
Conditions i)-iv) in Theorem \ref{thm:main} imply that $W^u(\bar u, F)$ resp. $W^s(\bar v,F)$ is a finite dimensional resp. codimensional embedded $C^2$-submanifold of $X$ 
homeomorphic to a disk of dimension $\mu(\bar u)$ resp. of codimension $\mu(\bar v)$. The Morse-Smale condition up to order one then implies that $\mathcal M(\bar u,\bar v)$ is 
a pre-compact one-dimensional embedded submanifold, and as such consists of only finitely many $F$-flow lines connecting $\bar u$ and $\bar v$. In particular, $\partial \bar u$ is 
well-defined as there can be only finitely many critical points of $f$ of index $\mu(\bar u)-1$ contained in $f^{-1}(-\infty, f(\bar u))$. Finally, the Morse-Smale condition up to order two implies 
that $\partial^2 =0$, so that $(C_*(f),\partial)$ is indeed a chain complex. The fact that the resulting Morse homology is isomorphic to the singular homology of $X$ is proved in \cite[Theorem 2.8]{Abbondandolo:2006lk}.
\end{proof}

To prove Theorem \ref{thm:pseudogradient}, the first step is to relate the notion of non-degeneracy above with a notion of non-degeneracy which is more convenient for Morse homology, namely the 
existence of a linear hyperbolic operator $L$ on $X$ such that $f$ is a Lyapounov function for the linear flow defined by $L$ in a neighborhood of $\bar u$, see e.g. \cite{Abbondandolo:2006lk,Tromba:1977,Uhlenbeck:1972}. 

\begin{prop}
Let $\bar u \in X$ be a non-degenerate critical point of $f$ as in \eqref{eq:functional}. Then, there exist a neighborhood $\mathcal U$ of $\bar u$ in $X$ 
and a linear hyperbolic operator $L:T_{\bar u} X \to T_{\bar u} X$ such that, on $\mathcal U$, $f$ is a Lyapounov function for the linear flow defined by $L$.  
\label{prop:hyperbolic}
\end{prop}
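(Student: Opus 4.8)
Here is the strategy I would follow.

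The plan is to read $Q:=\diff^2 f(\bar u)$ as a self-adjoint operator on the Hilbert space $H:=W^{1,2}_0(\Omega)$, into which $X$ embeds continuously and densely. By the regularity theory for the critical points (as in \cite{Cingolani:2003,Cingolani:2007}) $\bar u\in C^{1,\alpha}$, so $(1+|\nabla\bar u|^2)^{p-1}$ and $(1+|\nabla\bar u|^2)^{p-2}\nabla\bar u\otimes\nabla\bar u$ are bounded and the leading part
\[
b(v,w):=\int_\Omega(1+|\nabla\bar u|^2)^{p-1}\langle\nabla v,\nabla w\rangle\,\diff x+2(p-1)\int_\Omega(1+|\nabla\bar u|^2)^{p-2}\langle\nabla\bar u,\nabla v\rangle\langle\nabla\bar u,\nabla w\rangle\,\diff x
\]
of $Q$ is a bounded, symmetric, coercive bilinear form on $H$ (the first coefficient is $\geq1$, the second term is nonnegative); by Lax-Milgram it is represented by a positive self-adjoint isomorphism $B\geq\mathrm{Id}$ of $H$, while the zeroth order part $\int_\Omega G''(\bar u)vw\,\diff x$ is represented, via the compact embedding $H\hookrightarrow L^2(\Omega)$ and $G''(\bar u)\in L^\infty(\Omega)$, by a compact self-adjoint operator $K$. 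Thus $Q=\langle(B+K)\cdot,\cdot\rangle_H$ with $S:=B+K$ a compact self-adjoint perturbation of the positive isomorphism $B$.

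Next I would turn non-degeneracy into invertibility of $S$: if $S\xi=0$ for $\xi\in H$, then $\xi$ weakly solves a linear uniformly elliptic equation with $C^{0,\alpha}$ coefficients, so elliptic regularity gives $\xi\in X$; then $\diff^2 f(\bar u)[\xi,\cdot]$ vanishes on $X$, whence $\xi=0$. So $0\notin\sigma(S)$, and since $S$ is a compact perturbation of a positive isomorphism it has only finitely many negative eigenvalues; thus $H=H^-\oplus H^+$ with $H^-$ finite dimensional (of dimension the Morse index $\mu(\bar u)$), $Q$ negative definite on $H^-$, and $Q\geq\lambda\|\cdot\|_H^2$ on $H^+$ for some $\lambda>0$. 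The eigenfunctions spanning $H^-$ are smooth, hence $H^-\subset X$, and the $H$-orthogonal projection $P$ onto $H^-$ restricts to a bounded finite-rank projection of $X$; setting $X^+:=(\mathrm{Id}-P)X=X\cap H^+$ one gets $X=H^-\oplus X^+$ topologically, with $H^-$ and $X^+$ being $Q$-orthogonal.

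I would then take $L:=\mathrm{Id}_{H^-}\oplus(-\mathrm{Id}_{X^+})\in\mathcal L(X)$ (rescaling the summands if necessary); this is linear with spectrum $\{-1,+1\}$, hence hyperbolic, with unstable subspace $H^-$ of dimension $\mu(\bar u)$, and I would let $\mathcal U$ be a small ball about $\bar u$. Writing $x=\bar u+v$, $v=v^-+v^+$, the $L$-flow line through $x$ is $t\mapsto\bar u+e^tv^-+e^{-t}v^+$, so $f$ is a Lyapounov function on $\mathcal U$ exactly when $\diff f(\bar u+v)[v^--v^+]<0$ for every $v\neq0$ with $\bar u+v\in\mathcal U$. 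The plan is to split this as $\diff f(\bar u+v)[v^-]-\diff f(\bar u+v)[v^+]$ and estimate each piece: on the finite dimensional $H^-$ the second order expansion of $f$ at $\bar u$ together with equivalence of all norms gives $\diff f(\bar u+v^-)[v^-]\leq-c\|v^-\|_X^2$; on $X^+$ one needs a genuinely positive lower bound for $\diff f(\bar u+v^+)[v^+]$; and the mixed contributions arising when passing from $\diff f(\bar u+v)[v^{\pm}]$ to $\diff f(\bar u+v^{\pm})[v^{\pm}]$ must be absorbed, using that $H^-$ and $X^+$ are $Q$-orthogonal and that $\diff^2 f$ is continuous (on the smooth finite dimensional $H^-$ the relevant differences are even controllable in the $H$-norm).

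The main obstacle is the $X^+$-estimate. On $H^+$ the quadratic form only satisfies $Q(v^+,v^+)\geq\lambda\|v^+\|_H^2$, and the \emph{Hilbert} norm $\|\cdot\|_H$ is not equivalent to the Banach norm $\|\cdot\|_X$ in which the Taylor remainder of $\diff f$ lives, so a second-order expansion alone does not control that remainder — this is precisely the characteristic difficulty of Morse theory on non-Hilbertable Banach spaces, and it is what Lemma \ref{lem:uniformconvexity} is meant to overcome. Using the uniform convexity of $u\mapsto\tfrac1{2p}\int_\Omega(1+|\nabla u|^2)^p\,\diff x$ (the integrand has Hessian growing like $|\nabla u|^{2p-2}$, so the associated monotone operator is $2p$-uniformly monotone), one obtains, uniformly for base points near $\bar u$, a lower bound $(\diff f(\bar u+w)-\diff f(\bar u))[w]\gtrsim\|w\|_X^{2p}+\|w\|_H^2$ for $w\in X^+$ near $0$, the lower-order ($G$-)terms being absorbed by the $Q$-coercivity on $X^+$; since $\diff f(\bar u)=0$ this yields $\diff f(\bar u+v^+)[v^+]>0$ strictly, with a quantitative strength ($\|v^+\|_X^{2p}$ in the Banach norm, plus $\|v^+\|_H^2$) that suffices to absorb the mixed terms after shrinking $\mathcal U$ and, if needed, rescaling $L$ on $X^+$. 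Putting everything together gives $\diff f(x)[Lx]<0$ on $\mathcal U\setminus\{\bar u\}$. I expect the delicate point to be exactly this final combination — carefully separating the $p$-growth and the $G$ contributions and matching each to the right coercive term — and it is here that the hypothesis $p>n/2$ enters, through the $C^{1,\alpha}$ regularity of $\bar u$ and the embedding $X\hookrightarrow C^0(\overline\Omega)$.
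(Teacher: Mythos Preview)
Your setup --- embedding $X$ into a Hilbert space determined by the principal part of $\diff^2 f(\bar u)$, writing $\diff^2 f(\bar u)$ as a compact self-adjoint perturbation of a positive isomorphism, getting the spectral splitting $X=\mathbb H^-\oplus W$ with $\mathbb H^-\subset X$ finite dimensional by elliptic regularity, and taking $L=\mathrm{id}_{\mathbb H^-}\oplus(-\mathrm{id}_W)$ --- is exactly what the paper does. The divergence is in the final Lyapounov estimate, and there you miss the key simplification.

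The paper writes, using $\diff f(\bar u)=0$,
\[
\diff f(\bar u+x)[Lx]=\int_0^1\diff^2 f(\bar u+sx)[x_--x_W,\,x_-+x_W]\,\diff s,
\]
and observes that the cross terms cancel \emph{identically} by the symmetry of the second differential: $\diff^2 f(\bar u+sx)[x_-,x_W]-\diff^2 f(\bar u+sx)[x_W,x_-]=0$. What is left is $\int_0^1\big(\diff^2 f(\bar u+sx)[x_-,x_-]-\diff^2 f(\bar u+sx)[x_W,x_W]\big)\,\diff s$, and Lemma~\ref{lem:uniformconvexity} together with its easy finite-dimensional analogue on $\mathbb H^-$ (applied at the intermediate points $\bar u+sx$) immediately gives $\diff f(\bar u+x)[Lx]\le-\mu\|x\|_{\bar u}^2$. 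No mixed terms to absorb, no $2p$-monotonicity, no rescaling of $L$.

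Your route --- passing from $\diff f(\bar u+v)[v^\pm]$ to $\diff f(\bar u+v^\pm)[v^\pm]$ --- manufactures cross terms of the form $\int_0^1\diff^2 f(\bar u+v^-+tv^+)[v^+,v^-]\,\diff t$ that do \emph{not} cancel, and your plan to absorb them via $Q$-orthogonality and continuity of $\diff^2 f$ runs into the very norm mismatch you flag: the error has size $\omega(\|v\|_X)\,\|v^+\|_X\,\|v^-\|_X$, and after Young's inequality one is left with a term $\omega(r)^2\|v^+\|_X^2$ that neither $\|v^+\|_H^2$ nor $\|v^+\|_X^{2p}$ (with $2p>2$, hence weaker for small $v^+$) can dominate as $r\to 0$. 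This is a genuine gap. Note also that the content of Lemma~\ref{lem:uniformconvexity} is not a $\|\cdot\|_X^{2p}$ lower bound from monotonicity; it is the uniform-in-$u$ estimate $\diff^2 f(u)[v,v]\ge\mu\|v\|_{\bar u}^2$ for $v\in W$ and $u$ near $\bar u$, proved by a semicontinuity/compactness argument --- precisely what feeds into the integral formula once the cross terms have been cancelled by symmetry.
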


To prove Proposition \ref{prop:hyperbolic} we first recall some facts which are proved in \cite{Cingolani:2003} for a slightly different class of functionals, but all proofs go through with minor modifications to the setting of the present paper. 
 Because of the embedding $X \hookrightarrow L^\infty(\Omega)$, the critical point $\bar u$ is obviously contained in $L^\infty(\Omega)$. The results in \cite{Tolksdorf:1983,Tolksdorf:1984} then imply that $\bar u \in C^1(\overline \Omega)$. 
Following \cite{Cingolani:2003}, on $C^\infty_0(\Omega)$ we introduce the scalar product 
$$\langle v,w\rangle_{\bar u} .:= \int_\Omega \big (1+|\nabla \bar u|^2\big )^{p-1} \langle \nabla v,\nabla w\rangle \, \diff x + 2(p-1) \int_\Omega \big (1+|\nabla \bar u|^2)^{p-2} \langle \nabla \bar u,\nabla v\rangle \langle \nabla \bar u,\nabla w\rangle \, \diff x,$$
and define the Hilbert space 
$$\mathbb H_{\bar u} := \overline{C^\infty_0(\Omega)}^{\langle \cdot ,\cdot \rangle_{\bar u}}.$$
It is easy to see that $\mathbb H_{\bar u}$ is isomorphic to $W^{1,2}_0(\Omega)$, and thus we have a continuous embedding $X\hookrightarrow \mathbb H_{\bar u}.$
Moreover, $\diff^2 f(\bar u): X\to X^*$ extends to an invertible operator $H_{\bar u}:\mathbb H_{\bar u} \to \mathbb H_{\bar u}$ (where we have identified $\mathbb H_{\bar u}^*$ with $\mathbb H_{\bar u}$ using Riesz' representation theorem). Indeed, 
$$H_{\bar u} = \text{id} + K,$$
where $K: \mathbb H_{\bar u} \to \mathbb H_{\bar u}, v \mapsto Kv$, is the compact operator uniquely defined by
$$\langle Kv,w\rangle_{\bar u} = \int_\Omega G''(\bar u) vw\, \diff x, \quad \forall w \in \mathbb H_{\bar u}.$$
Being $H_{\bar u}$ a compact perturbation of the identity, it has Fredholm index zero. Furthermore, $H_{\bar u}$ is self-adjoint and as such its spectrum is real and consists of the eigenvalue 1 (which has infinite multiplicity) and of eigenvalues different from 1 (with finite multiplicity) which accumulate to $1$. Accordingly, we have a natural $\langle \cdot,\cdot\rangle_{\bar u}$-orthogonal decomposition
$$\mathbb H_{\bar u} = \mathbb H^- \oplus \mathbb H^0 \oplus \mathbb H^+,$$ 
where $\mathbb H^0:= \ker H_{\bar u}$ and 
$$\mathbb H^- := \!\!\!\!\! \bigoplus_{\footnotesize{\begin{array}{c} \lambda \in \sigma(H_{\bar u}) \\ \lambda < 0\end{array}}} \!\!\!\!\! \ker \big (\lambda \cdot \text{id} - H_{\bar u}\big ),\qquad \mathbb H^+ :=  \!\!\!\!\! \bigoplus_{\footnotesize{\begin{array}{c} \lambda \in \sigma(H_{\bar u}) \\ \lambda > 0\end{array}}}  \!\!\!\!\! \ker \big (\lambda \cdot \text{id} - H_{\bar u}\big ),$$
are the negative resp. positive eigenspace of $H_{\bar u}.$ Clearly, the set of positive eigenvalues of $H_{\bar u}$ is uniformly bounded away from zero, thus we can find a constant $\mu >0$ such that 
\begin{equation}\langle H_{\bar u} v,v\rangle_{\bar u} \geq \mu \|v\|_{\bar u}^2, \quad \forall v \in \mathbb H^+.\label{eq:H+}\end{equation}
Also, $\dim \mathbb H^- \oplus \mathbb H^0 < +\infty$, and standard regularity theory implies that 
$$\mathbb H^- \oplus \mathbb H^0 \subset X,$$
see \cite[Theorems 8.15, 8.24, 8.29]{Gilbarg}. Consequently, we obtain a splitting 
$$X= \mathbb H^- \oplus \mathbb H^0 \oplus W,$$
where $W:= \mathbb H^+ \cap X$, and \eqref{eq:H+} implies that 
\begin{equation}
\diff^2 f(\bar u)[v,v] \geq \mu \|v\|_{\bar u}^2, \quad \forall v \in W.
\label{eq:W}
\end{equation}
Since by assumption $\diff^2 f(\bar u):X\to X^*$ is injective, we finally deduce that $L_{\bar u}$ is injective and thus an isomorphism, being Fredholm of index zero. In particular,
$$\mathbb H_{\bar u} = \mathbb H^- \oplus \mathbb H^+, \quad \text{and}\ \ X = \mathbb H^-  \oplus W.$$
In the next result we prove that \eqref{eq:W} holds for any $u$ in a sufficiently small neighborhood of $\bar u$, thus showing that $f$ is locally uniformly convex around $\bar u$ in the $W$-direction with respect to the $\|\cdot\|_{\bar u}$-norm. The proof, which is analogous to the one of \cite[Lemma 4.2]{Cingolani:2003}, is included for the reader's convenience.  

\begin{lem}
There exists $r>0$ and $\mu>0$ such that for any $u \in X$ with $\|u -\bar u\|<r$ we have 
\begin{equation}
\diff^2 f(u)[v,v] \geq \mu \|v\|_{\bar u}^2, \quad \forall v \in W.
\label{eq:d2fW}
\end{equation}
In particular, $\bar u$ is a strict minimum point of $f$ in the $W$-direction.
\label{lem:uniformconvexity}
\end{lem}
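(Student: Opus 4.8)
Following \cite[Lemma 4.2]{Cingolani:2003}, the plan is to argue by contradiction. If no such $(r,\mu)$ existed, then for each $n$ (applying the negation of the conclusion with $r=\mu=1/n$) we would find $u_n\in X$ with $\|u_n-\bar u\|<1/n$ and $v_n\in W\setminus\{0\}$ with $\diff^2 f(u_n)[v_n,v_n]<\tfrac1n\|v_n\|_{\bar u}^2$; after rescaling we may assume $\|v_n\|_{\bar u}=1$, so that $\diff^2 f(u_n)[v_n,v_n]<1/n$. It is convenient to write
$$\diff^2 f(u)[v,v]=Q_u(v,v)+\int_\Omega G''(u)v^2\,\diff x,\qquad Q_u(v,v):=\int_\Omega A_u\,\nabla v\cdot\nabla v\,\diff x,$$
with $A_u:=(1+|\nabla u|^2)^{p-1}\mathrm{Id}+2(p-1)(1+|\nabla u|^2)^{p-2}\,\nabla u\otimes\nabla u$, a symmetric matrix satisfying $A_u\geq\mathrm{Id}$ (recall $p>n/2\geq 1$, so $p>1$). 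Since $\|\cdot\|_{\bar u}$ is equivalent to the $W^{1,2}_0$-norm, $\{v_n\}$ is bounded in $\mathbb H_{\bar u}\cong W^{1,2}_0(\Omega)$, so up to a subsequence $v_n\rightharpoonup v$ in $\mathbb H_{\bar u}$ and $v_n\to v$ in $L^2(\Omega)$; and since $u_n\to\bar u$ in $X\hookrightarrow L^\infty(\Omega)$, we may also assume $u_n\to\bar u$ in $L^\infty(\Omega)$ and, along a further subsequence, $\nabla u_n\to\nabla\bar u$ a.e., hence $A_{u_n}\to A_{\bar u}$ a.e. on $\Omega$. Finally $v\in\mathbb H^+$, since $\mathbb H^+$ is a closed, hence weakly closed, subspace of $\mathbb H_{\bar u}$ and $v_n\in W\subset\mathbb H^+$.

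Next I would pass to the limit in $Q_{u_n}(v_n,v_n)+\int_\Omega G''(u_n)v_n^2\,\diff x<1/n$. The zeroth-order term is harmless: $G''(u_n)\to G''(\bar u)$ in $L^\infty(\Omega)$ by continuity of $G''$ and uniform convergence of $u_n$, while $v_n^2\to v^2$ in $L^1(\Omega)$, so $\int_\Omega G''(u_n)v_n^2\,\diff x\to\int_\Omega G''(\bar u)v^2\,\diff x$. The heart of the proof — and the step I expect to be the main obstacle — is the weak lower semicontinuity
$$\liminf_{n\to\infty}Q_{u_n}(v_n,v_n)\ \geq\ Q_{\bar u}(v,v)=\|v\|_{\bar u}^2 .$$
The difficulty is that $A_{u_n}$ cannot be compared with $A_{\bar u}$ pointwise in a quantitative way: $\nabla u_n$ converges only almost everywhere (and only along a subsequence) and the $A_{u_n}$ are not uniformly bounded, so the usual "frozen coefficient" device has to be localized. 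Concretely, I would fix $\e>0$ and invoke Egorov's theorem to produce $\Omega_\e\subset\Omega$ with $|\Omega\setminus\Omega_\e|<\e$ and $A_{u_n}\to A_{\bar u}$ uniformly on $\Omega_\e$. Here the regularity $\bar u\in C^1(\ol\Omega)$ is essential: it makes $A_{\bar u}$ bounded, hence the $A_{u_n}$ are uniformly bounded on $\Omega_\e$ for $n$ large, and by uniform continuity of the square root on bounded sets of symmetric positive matrices one gets $A_{u_n}^{1/2}\mathbf 1_{\Omega_\e}\to A_{\bar u}^{1/2}\mathbf 1_{\Omega_\e}$ uniformly. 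Together with $\nabla v_n\rightharpoonup\nabla v$ in $L^2$ this yields $A_{u_n}^{1/2}\mathbf 1_{\Omega_\e}\nabla v_n\rightharpoonup A_{\bar u}^{1/2}\mathbf 1_{\Omega_\e}\nabla v$ in $L^2(\Omega)$, so weak lower semicontinuity of the $L^2$-norm together with $A_{u_n}\geq 0$ gives
$$\liminf_n Q_{u_n}(v_n,v_n)\ \geq\ \liminf_n\int_{\Omega_\e}A_{u_n}\nabla v_n\cdot\nabla v_n\,\diff x\ \geq\ \int_{\Omega_\e}A_{\bar u}\nabla v\cdot\nabla v\,\diff x ;$$
letting $\e\downarrow 0$ and using absolute continuity of the integral of $A_{\bar u}\nabla v\cdot\nabla v\in L^1(\Omega)$ gives the claim.

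Combining the two limits, $\langle H_{\bar u}v,v\rangle_{\bar u}=\|v\|_{\bar u}^2+\int_\Omega G''(\bar u)v^2\,\diff x\leq\liminf_n\diff^2 f(u_n)[v_n,v_n]\leq 0$; but $v\in\mathbb H^+$, so \eqref{eq:H+} forces $\|v\|_{\bar u}=0$, i.e. $v=0$. Then $\int_\Omega G''(u_n)v_n^2\,\diff x\to 0$, whence $Q_{u_n}(v_n,v_n)<\tfrac1n-\int_\Omega G''(u_n)v_n^2\,\diff x\to 0$, while $Q_{u_n}(v_n,v_n)\geq\int_\Omega|\nabla v_n|^2\,\diff x$; since $\|\cdot\|_{\bar u}$ is equivalent to $\|\nabla\,\cdot\,\|_{L^2(\Omega)}$ on $W^{1,2}_0(\Omega)$, this forces $\|v_n\|_{\bar u}\to 0$, contradicting $\|v_n\|_{\bar u}=1$. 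This proves \eqref{eq:d2fW}. The last assertion then follows immediately: for $v\in W\setminus\{0\}$ with $\|v\|<r$, Taylor's formula with integral remainder together with $\diff f(\bar u)=0$ and \eqref{eq:d2fW} applied along the segment $t\mapsto\bar u+tv$, $t\in[0,1]$, gives $f(\bar u+v)-f(\bar u)=\int_0^1(1-t)\,\diff^2 f(\bar u+tv)[v,v]\,\diff t\geq\tfrac\mu2\|v\|_{\bar u}^2>0$.
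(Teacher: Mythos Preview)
Your proof is correct and follows a route close to, but noticeably different from, the paper's. Both arguments start from the same contradiction setup and the same weak limit $v\in\mathbb H^+$, and both hinge on the lower semicontinuity inequality $\liminf_n Q_{u_n}(v_n,v_n)\geq Q_{\bar u}(v,v)$. The paper obtains this in one stroke by invoking Ioffe's theorem \cite{Ioffe:1977} on sequential lower semicontinuity of integral functionals that are convex in the gradient variable; you instead give a self-contained Egorov-based argument, exploiting $\bar u\in C^1(\ol\Omega)$ to get uniform convergence of $A_{u_n}^{1/2}$ on large sets. Your approach is more elementary and avoids an external reference, at the cost of a bit more bookkeeping.

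The other genuine difference is the structure of the contradiction. The paper first shows $v_\infty\neq 0$ (using $Q_{u_n}(v_n,v_n)\geq\|\nabla v_n\|_2^2\geq c$), then derives $0\geq\langle H_{\bar u}v_\infty,v_\infty\rangle_{\bar u}\geq\mu\|v_\infty\|_{\bar u}^2>0$; but because the paper's contradiction hypothesis is only $\liminf\leq 0$, a second diagonal argument is needed to upgrade ``$\liminf>0$ for every sequence'' to a uniform positive lower bound $\mu$. You instead negate the full statement directly, producing sequences with $\diff^2 f(u_n)[v_n,v_n]<1/n$, conclude $v=0$, and then use the very same inequality $Q_{u_n}(v_n,v_n)\geq c\|v_n\|_{\bar u}^2=c$ as the final contradiction. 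This is a cleaner one-pass argument and sidesteps the diagonal step entirely; the paper's organization, on the other hand, makes the role of Ioffe's semicontinuity result more transparent.
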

\begin{proof}
Assume that there exist sequences $(u_n)\subset X$ and $(v_n)\subset W$ such that $u_n \to \bar u$ in $X$, $\|v_n\|_{\bar u}=1$ for all $n\in \N$, and 
\begin{equation}
\liminf_{n\to +\infty}\  \diff^2 f(u_n)[v_n,v_n] \leq 0.
\label{eq:liminf}
\end{equation}
Up to taking a subsequence, $v_n$ weakly converges (thus also strongly in $L^2$) to $v_\infty \in \mathbb H^+$. Noticing that 
\begin{align*}
\diff^2 &f(u_n)[v_n,v_n] \\
	&= \int_\Omega \big (1+|\nabla u_n|^2\big )^{p-1} | \nabla v_n|^2 \diff x + 2(p-1) \int_\Omega \big (1+|\nabla u_n|^2)^{p-2} \langle \nabla u_n,\nabla v_n\rangle^2 \, \diff x+ \int_\Omega G''(u_n) v_n^2\, \diff x\\
	&\geq \| \nabla v_n\|_2^2 +  \int_\Omega G''(u_n) v_n^2\, \diff x\\
	&\geq c \|v_n\|_{\bar u}^2 +  \int_\Omega G''(u_n) v_n^2\, \diff x\\
	&= 1 +  \int_\Omega G''(u_n) v_n^2\, \diff x,
	\end{align*}
	we infer that $v_\infty\neq 0$, as otherwise this would contradict \eqref{eq:liminf}. We now set
	$$h(x,u,v) := \big (1+|\nabla u|^2\big )^{p-1} | \nabla v|^2 + 2(p-1) \big (1+|\nabla u|^2)^{p-2} \langle \nabla u,\nabla v\rangle^2,$$
	so that 
	$$\diff^2 f(u_n)[v_n,v_n] = \int_\Omega h(x,u_n,v_n) \, \diff x + \int_\Omega G''(u_n) v_n^2\, \diff x.$$
	Obviously, $h$ is non-negative, continuous, and $v\mapsto h(x,u,v)$ is convex for every $(x,u)$. Therefore, the result in \cite{Ioffe:1977} implies that 
	$$(u,v) \mapsto \int_\Omega h(x,u,v)\, \diff x$$
	is lower-semicontinuous with respect to the strong convergence in the $u$-direction and the weak convergence in the $v$-direction.
	Now, using Assumption \eqref{eq:liminf}, Equation \eqref{eq:H+}, and the fact that $v_n\to v_\infty$ in $L^2$, we conclude
	\begin{align*}
	0 &\geq \liminf_{n\to +\infty}\  \diff^2 f(u_n)[v_n,v_n] \\
	   &=  \liminf_{n\to +\infty} \left ( \int_\Omega h(x,u_n,v_n) \, \diff x + \int_\Omega G''(u_n) v_n^2\, \diff x\right )\\
	   &\geq \int_\Omega h(x,\bar u,v_\infty)\, \diff x + \int_\Omega G''(\bar u) v_\infty^2\, \diff x\\
	   &= \langle L_{\bar u} v_\infty,v_\infty\rangle_{\bar u} \\
	   &\geq \mu \|v_\infty\|_{\bar u}^2,
	   \end{align*}
	   clearly a contradiction, as $v_\infty\neq 0$. This shows that 
	   $$\liminf_{n\to +\infty}\  \diff^2 f(u_n)[v_n,v_n] >0$$
	   for all sequences $(u_n)\subset X$ such that $u_n\to \bar u$, and all sequences $(v_n)\subset W$ such that $\|v_n\|_{\bar u}=1$ for all $n\in \N$. 
	   We claim now that for all such sequences there exists $\mu>0$ such that 
	   $$\liminf_{n\to +\infty}\  \diff^2 f(u_n)[v_n,v_n] \geq \mu.$$
	   Clearly, this finishes the proof of the lemma. Assuming this is not the case, for every $m\in \N$ we find sequences $(u_n^{(m)})$ and $(v_n^{(m)})$ as above such that 
	   $$\liminf_{n\to +\infty}\  \diff^2 f(u_n^{(m)})[v_n^{(m)},v_n^{(m)}] < \frac 1m.$$
	   Therefore, for every $m\in \N$ we can find $n(m)\in \N$ such that 
	   $$\diff^2 f(u_{n(m)}^{(m)})[v_{n(m)}^{(m)},v_{n(m)}^{(m)}] < \frac 2m.$$
	   The sequences $(u_{n(m)}^{(m)})_{m\in \N}$ and $(v_{n(m)}^{(m)})_{m\in \N}$ also satisfy 
	   $$u_{n(m)}^{(m)} \to \bar u, \quad \|v_{n(m)}^{(m)}\|_{\bar u}=1, \ \forall m\in \N,$$
	   and by construction 
	   $$\lim_{m\to +\infty} \diff^2 f(u_{n(m)}^{(m)})[v_{n(m)}^{(m)},v_{n(m)}^{(m)}]  =0,$$
	   a contradiction. 
\end{proof}

\begin{rmk}
A similar statement as in Lemma \ref{lem:uniformconvexity} holds also in the $\mathbb H^-$-direction, namely there exist $r>0$ and $\mu>0$ such that for any $u\in X$ with $\|u-\bar u\|<r$ we have 
\begin{equation}
\diff^2 f(u) [v,v] \leq -\mu \|v\|_{\bar u}^2, \quad \forall v \in \mathbb H^-.
\label{eq:d2fH-}
\end{equation}
However, the proof in this case is elementary since $f$ is of class $C^2$ and $\mathbb H^-$ is finite dimensional, so that $\|\cdot\|_{\bar u}$ and $\|\cdot\|$ are equivalent on $\mathbb H^-$. The details are left to the reader. 
\end{rmk}

\begin{proof}[Proof of Proposition \ref{prop:hyperbolic}]
On 
$$T_{\bar u} X \cong X = \mathbb H^- \oplus W$$
we define the linear operator $L=(\text{id}, -\text{id})$, that is 
$$L x := L(x_- + x_W) := x_- - x_W, \quad \forall x = x_- + x_W \in X.$$
The operator $L$ is clearly hyperbolic.  We claim
that there exists a sufficiently small neighborhood $\mathcal U$ of $\bar u$ such that $f$ is a Lyapounov function for the linear flow defined by $L$,
meaning that, for every $x\neq 0 \in \mathcal U - \{\bar u\}$, 
$$t \mapsto f(\bar u + e^{tL} x)$$
is strictly monotone decreasing, or, equivalenty, that 
$$\mathrm{d} f(\bar u + x) [Lx] < 0, \quad \forall x \in \mathcal U\setminus \{0\}.$$ 
We have 
$$\mathrm d f(\bar u + x)[\cdot] =\int_0^1 \frac{\mathrm d}{\mathrm d s} \big (f(\bar u + sx)\big )[\cdot]\, \diff s = \int_0^1 \mathrm d^2 f(\bar u + sx)[\cdot,x]\, \mathrm d s,$$
choose $r,\mu>0$ such that \eqref{eq:d2fW} and \eqref{eq:d2fH-} hold, and compute for $x\in X$ with $\|x\|<r$: 
\begin{align*}
\mathrm d &f(\bar u + x)[Lx] \\ 
   &= \int_0^1 \mathrm d^2 f(\bar u + sx)[Lx,x]\, \mathrm d s\\ 
			    &= \int_0^1 \mathrm d^2 f(\bar u + sx)[x_- - x_W, x_- + x_W] \, \mathrm d s\\
			    &= \int_0^1\Big ( \mathrm d^2 f(\bar u +sx)[x_-, x_-] -  \mathrm d^2 f(\bar u +sx)[x_W,x_W ] + \underbrace{ \mathrm d^2 f(\bar u +sx)[x_-, x_W]-  \mathrm d^2 f(\bar u +sx)[x_W, x_-]}_{=0}\Big ) \mathrm d s\\
			    & \leq - \mu \|x_-\|_{\bar u}^2 - \mu \|x_W\|_{\bar u}^2\\ 
			    & \leq  - \mu \|x\|_{\bar u}^2,
\end{align*}
which implies the claim.
\end{proof}

We recall that $f:X\to \R$ of class $C^1$ is said to satisfy the \textit{Palais-Smale condition}, if any sequence $(u_n)\subset X$ such that $f(u_n)\to c$, for some $c\in \R$, and 
$\mathrm{d} f(u_n)\to 0$ admits a converging subsequence. By the continuity of the differential, any limit point of a Palais-Smale sequence is a critical point of $f$. 

\begin{rmk}The Palais-Smale  condition plays the role of compactness of sublevel sets, and as such is a crucial ingredient in infinite dimensional critical point theory. 
We shall however stress the fact that, when the Morse
index and co-index of critical points is infinite, that is when $f$ is \textit{strongly indefinite}, the Palais-Smale condition alone is not enough to construct Morse homology, 
as the intersection between stable and unstable manifolds of 
pair of critical points might not be pre-compact, not even if finite dimensional. In such cases, stronger conditions are needed, see e.g. \cite{AM:05,Asselle:2020b,Asselle:2022}, where 
Morse homology for an abstract class of strongly indefinite functionals on a Hilbert manifold resp. for the Hamiltonian action in cotangent bundles is defined. Also, classical Morse theory is in such cases of no help,
since the topology of sublevel sets does not change when crossing a critical point with infinite Morse index. 
Anyhow, this will not be the case in the present paper, since critical points of a functional as in \eqref{eq:functional} always have finite Morse index. 
\end{rmk}

We show now that the growth condition \eqref{eq:growthG} on the function $G$ implies that the functional $f$ in \eqref{eq:functional} satisfies the Palais-Smale condition on $X$. 

\begin{lem}
Let $f:X\to \R$ be a functional as in \eqref{eq:functional}. Then $f$ satisfies the Palais-Smale condition. 
\label{lem:ps}
\end{lem}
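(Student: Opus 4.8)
The plan is to follow the classical scheme for quasilinear functionals whose lower-order term is a compact perturbation: first establish coercivity of $f$, so that Palais--Smale sequences are automatically bounded, then extract a weakly convergent subsequence, and finally upgrade weak to strong convergence by combining the $(S_+)$-property of the principal part with the compactness of the perturbation. To begin with, I would observe that $p>n/2$ gives $2p>2$ and, as already used above, a continuous embedding $X\hookrightarrow L^\infty(\Omega)$; using $(1+|\nabla u|^2)^p\ge|\nabla u|^{2p}$, the Poincar\'e inequality (so that $u\mapsto\|\nabla u\|_{2p}$ is an equivalent norm on $X=W^{1,2p}_0(\Omega)$), the growth condition \eqref{eq:growthG}, and the embedding into $L^\infty$, one gets
$$f(u)\ \ge\ \frac{1}{2p}\,\|\nabla u\|_{2p}^{2p}-\beta\int_\Omega|u|^\alpha\,\diff x-\delta|\Omega|\ \ge\ c_1\|u\|^{2p}-c_2\|u\|^{\alpha}-c_3$$
for suitable $c_1,c_2,c_3>0$. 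Since $\alpha<2p$, the right-hand side diverges as $\|u\|\to\infty$, so $f$ is coercive; in particular $f$ is bounded below (the fact invoked in the proof of the Corollary above), and any sequence along which $f$ stays bounded, a fortiori any Palais--Smale sequence $(u_n)$, is bounded in $X$.

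Next, since $2p>1$ the space $X$ is reflexive, so one may pass to a subsequence with $u_n\rightharpoonup u$ in $X$. Because $2p>n$, the embedding $W^{1,2p}_0(\Omega)\hookrightarrow C(\overline\Omega)$ is compact (Morrey plus Arzel\`a--Ascoli), whence $u_n\to u$ strongly in $L^\infty(\Omega)$, hence in every $L^q(\Omega)$; and since $G\in C^2$ and the $u_n$ lie in a fixed bounded subset of $L^\infty$, also $G'(u_n)\to G'(u)$ in $L^\infty(\Omega)$.

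The heart of the matter is the strong convergence. I would split $\diff f(u)=A(u)+B(u)$, where $\langle A(u),v\rangle:=\int_\Omega(1+|\nabla u|^2)^{p-1}\langle\nabla u,\nabla v\rangle\,\diff x$ is the principal part and $\langle B(u),v\rangle:=\int_\Omega G'(u)v\,\diff x$, and test $\diff f(u_n)\to0$ in $X^*$ against the bounded sequence $u_n-u$. The perturbation contributes $|\langle B(u_n),u_n-u\rangle|\le\|G'(u_n)\|_\infty\|u_n-u\|_1\to0$, while $\langle A(u),u_n-u\rangle\to0$ by weak convergence, so that $\langle A(u_n)-A(u),u_n-u\rangle\to0$. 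Now $A$ is the differential of the strictly convex functional $u\mapsto\frac{1}{2p}\int_\Omega(1+|\nabla u|^2)^p\,\diff x$, and since $2p\ge2$ its integrand is uniformly convex on $\R^n$: there is $c=c(p,n)>0$ with
$$\big\langle(1+|\xi|^2)^{p-1}\xi-(1+|\eta|^2)^{p-1}\eta,\ \xi-\eta\big\rangle\ \ge\ c\,|\xi-\eta|^{2p},\qquad\forall\,\xi,\eta\in\R^n.$$
Taking $\xi=\nabla u_n$, $\eta=\nabla u$ and integrating yields $\langle A(u_n)-A(u),u_n-u\rangle\ge c\,\|\nabla(u_n-u)\|_{2p}^{2p}$, hence $\|\nabla(u_n-u)\|_{2p}\to0$, i.e. $u_n\to u$ in $X$, which is precisely the Palais--Smale condition.

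The one ingredient requiring real work is the pointwise monotonicity (uniform convexity) inequality for the field $\xi\mapsto(1+|\xi|^2)^{p-1}\xi$ used in the last step; it is elementary but genuinely uses $p\ge1$, and I would prove it by writing the increment as $\int_0^1 D^2\Phi(\eta+t(\xi-\eta))[\xi-\eta,\xi-\eta]\,\diff t$ for $\Phi(\xi)=\frac{1}{2p}(1+|\xi|^2)^p$, bounding $D^2\Phi(\zeta)\ge(1+|\zeta|^2)^{p-1}\mathrm{id}$, and using the elementary estimate $\int_0^1(1+|\eta+t(\xi-\eta)|^2)^{p-1}\,\diff t\ge c\,|\xi-\eta|^{2(p-1)}$. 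Everything else is routine Sobolev-embedding and compactness bookkeeping, essentially as in the references already cited.
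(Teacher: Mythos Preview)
Your proof is correct. The coercivity step (boundedness of Palais--Smale sequences) is essentially the same as the paper's Claim~1, and the passage to a weakly convergent subsequence with $u_n\to u$ in $L^\infty(\Omega)$ via the compact embedding is also common to both arguments.

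The genuine difference lies in how strong convergence is obtained. The paper writes $\diff f = D + K$ with $D$ the principal part and $K$ the Nemytskii perturbation, invokes a result from \cite{Benci} to the effect that the nonlinear operator $D:X\to X^*$ is a homeomorphism onto its image with continuous inverse, shows $K(u_n)\to K(u)$ in $X^*$ from the $L^\infty$-convergence, and then concludes $u_n = D^{-1}(\diff f(u_n) - K(u_n)) \to D^{-1}(-K(u))$. You instead exploit the $(S_+)$-structure directly: testing $\diff f(u_n)$ against $u_n-u$, discarding the compact term, and using the pointwise uniform monotonicity of $\xi\mapsto(1+|\xi|^2)^{p-1}\xi$ (valid since $p\ge1$) to obtain $\|\nabla(u_n-u)\|_{2p}\to0$. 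Your route is more self-contained, as it avoids the black-box invertibility statement and replaces it by an explicit elementary inequality whose proof you sketch; the paper's route is shorter once the cited result is granted. Both are standard and equally valid for this class of functionals.
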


\begin{proof}
Let $(u_n)\subset X$ be a Palais-Smale sequence for $f$. 

\vspace{2mm}

\textit{Claim 1.} $(u_n)$ is bounded in $X$. 

\vspace{2mm}

\noindent Suppose by contradiction that $\|u_n\|\to +\infty$. By the very definition of $f$, Equation \eqref{eq:growthG}, Poincar\'e inequality, and H\"older inequality we have for some constant $\gamma>0$: 
\begin{align*}
|f(u_n)| &= \left | \int_\Omega (1+|\nabla u_n|^2)^{p}\, \mathrm{d} x + \int_\Omega G(u_n)\, \mathrm{d} x\right | \\
	& \geq \|\nabla u_n\|_{2p}^{2p} - \int_\Omega |G(u_n)| \, \mathrm{d} x\\
	& \geq \gamma \|u_n\|^{2p} - \beta \|u_n\|_{L^\alpha}^\alpha - \delta \mu(\Omega)\\
	&\geq \gamma \|u_n\|^{2p} - \beta \mu(\Omega)^{\alpha (2p-\alpha)/2p} \|u_n\|_{L^{2p}}^{\alpha^2/2p} - \delta \mu(\Omega)\\
	&\geq \gamma \|u_n\|^{2p} - \beta \mu(\Omega)^{\alpha (2p-\alpha)/2p} \|u_n\|^{\alpha^2/2p} - \delta \mu (\Omega) \\
	&\to +\infty,	\end{align*}
as by assumption $\alpha<2p$. This is clearly a contradiction, since $f(u_n)\to c$ for some $c\in \R$.

\vspace{2mm}

\textit{Claim 2.} $(u_n)$ admits a converging subsequence. 

\vspace{2mm}

\noindent  Since $(u_n)$ is bounded, up to a subsequence we can assume $u_n\rightharpoonup u$ for some $u\in X$, hence in particular $u_n\to u$ in $L^\infty(\Omega)$. 
In view of \eqref{eq:differentialf} we can write $\mathrm {d} f:X\to X^*$ as 
$$\mathrm{d} f = D + K,$$
where 
\begin{align*}
& D:X\to X^*, \quad D(u)[\cdot] := \int_\Omega (1+|\nabla u|^2)^{p-1} \langle \nabla u, \nabla \cdot \rangle \, \mathrm{d}x, \\
& K:X\to X^*, \quad K(u)[\cdot] := \int_\Omega G'(u) v \, \mathrm{d} x.
\end{align*}
As shown in \cite[Appendix B]{Benci}, the non-linear operator $D$ is invertible with continuous inverse $D^{-1}$. We claim that $K(u_n)\to K(u)$ in operator norm. This follows from the fact that 
$K$ is sequentially compact, meaning that for any weakly converging sequence $w_n\rightharpoonup w$ in $X$ the operators $K(w_n)$ converge in operator norm to $K(w)$. To see this we compute using H\"older inequality with conjugated exponents $q,2p$: 
\begin{align*}
\|K(w_n)-K(w)\| &= \sup_{\|v\|=1} |K(w_n)[v]-K(w)[v]| \\
			& = \sup_{\|v\|=1} \left | \int_\Omega \big (G'(w_n) - G'(w)\big ) v \, \mathrm{d} x\right |\\
			& \leq \sup_{\|v\|=1} \left (\int_\Omega \big |G'(w_n)-G'(w)\big |^{q}\, \mathrm{d} x \right )^{1/q} \|v\|_{L^{2p}}\\
			& \leq \left (\int_\Omega \big |G'(w_n)-G'(w)\big |^{q}\, \mathrm{d} x \right )^{1/q}\\
			&\leq  \mu(\Omega)^{1/q} \sup_{x \in \Omega} |G'(w_n(x))-G'(w(x))| \\
			&\to 0,
\end{align*}
as $w_n\to w$ in $L^\infty(\Omega)$ and $G'$ is continuous. Now, since $(u_n)$ is a Palais-Smale sequence we have 
$$o(1) = \| \mathrm{d} f(u_n)\| = \|D(u_n) + K(u_n)\|,$$
thus $D(u_n) \to - K(u)$ in operator norm, and finally $u_n\to - D^{-1} (K(u))$ as $D^{-1}$ is continuous.
\end{proof}

\begin{proof}[Proof of Theorem \ref{thm:pseudogradient}]
Fix some $u_0\in X\setminus \text{crit}\, (f)$. Since $\diff f(u_0)\neq 0$, we find $v\in T_{u_0}X$ such that 
$$\diff f(u_0)[v] \leq - \frac 12 \|\mathrm d f(u_0)\|^2,$$
for some constant $\gamma(u_0)>0$. By the continuity of $\diff f$, we can find $r(u_0)>0$ such that 
\begin{equation}
\diff f (u)[v] \leq - \frac 14  \|\mathrm d f(u)\|^2, \quad \forall u \in B_{r(u_0)}(u_0).
\label{eq:vf1}
\end{equation}
Therefore, we set $V_{u_0}(u)\equiv v $.
    
    If $\bar u\in X$ is a critical point of $f$, then by Proposition \ref{prop:hyperbolic} we have that there exist $\mu(\bar u), r(\bar u)>0$ such that 
    \begin{equation}
    \diff f (\bar u+ x) [V_{\bar u} (x)] \leq - \mu(\bar u) \|x\|_{\bar u}^2,\quad \forall x \in X \ \text{with}\ \|x\|<r(\bar u),
    \label{eq:vf2}
    \end{equation}
    with $V_{\bar u}(x) := Lx$, where $L:T_{\bar u} X \to T_{\bar u} X$ is the hyperbolic operator given by the proposition. 
    Without loss of generality we may also assume that the open sets $B_{r(\bar u)}(\bar u), \ \bar u \in \text{crit}\, (f),$ are pairwise disjoint. 
    
    We now consider the open covering of $X$ given by 
    $$\mathfrak U:= \Big \{B_{r(u_0)} (u_0)\ \Big |\ u_0 \in X\setminus \text{crit}\, (f)\Big \} \cup \Big \{B_{r(\bar u)} (\bar u )\ \Big |\ \bar u  \in  \text{crit}\, (f)\Big \}.$$
    By the paracompactness of $X$, there exists a locally finite refinement $\mathfrak V=\{\mathcal V_j \, |\, j \in J\}$ of the open covering $\mathfrak U$. Let $\Gamma:J \to X$ be a function such that 
    $\mathcal V_j \subset B_{r(\Gamma(j))} (\Gamma(j))$ for all $j\in J$. Following \cite{Bonic:1966,Fry:2002}, $X$ admits $C^2$-smooth bump functions. Therefore, we can find a $C^2$-smooth 
    partition of unity $\{\chi_j\}$ subordinated to the open covering $\mathfrak V$, and set 
    $$\tilde F(u) := \sum_{j\in J} \chi_j (u) V_{\Gamma(j)}(u),\quad \forall u \in X.$$
    By construction $\tilde F$ is of class $C^2$, and the inequalities \eqref{eq:vf1} and \eqref{eq:vf2} imply that $f$ is a Lyapounov function for $\tilde F$. Furthermore, we can make $\tilde F$ to a bounded 
    vector field by multiplication by a suitable conformal factor: given a smooth monotonically decreasing function $\varphi:[0,+\infty)\to (0,+\infty)$ such that 
   $$\varphi \equiv 1 \ \ \text{for}\ s \in [0,1], \quad \varphi (s) = \frac 1s \ \ \text{for}\ s \geq 2,$$
    we set 
    $$F(u) := \varphi (\|\tilde F(u)\|) \cdot \tilde F(u), \quad \forall u \in X.$$
    Clearly, $F$ is complete, and $f$ is a Lyapounov function for $F$ as well. Moreover, in a neighborhood of each critical point the vector field $F$ coincides with the linear vector field $x\mapsto Lx$. 
    This implies (iii). We claim now that Palais-Smale sequences for $(f,F)$ are also Palais-Smale sequences for $f$. Indeed, let $(u_n)\subset X$ be a sequence such that 
    $$f(u_n)\to c, \quad \mathrm d f(u_n)[F(u_n)] \to 0.$$
    By Step 1 in the proof of Lemma \ref{lem:ps} we have that $(u_n)$ is a bounded sequence. If $(u_n)$ admits a subsequence converging to a critical point of $f$ then there is nothing to prove. 
    So without loss of generality we can assume that, up to passing to a subsequence if necessary, $(u_n)$ is contained in the complement of a open neighborhood of $\text{crit}\, (f)$. By \eqref{eq:vf1}
    we can find a constant $c>0$ such that 
    $$\|\mathrm d f(u_n)\|^2 \leq - \frac 1c \mathrm d f (u_n)[F(u_n)] = o(1),\quad \text{for}\ n \to +\infty.$$
    This together with Lemma \ref{lem:ps} implies (iv). Finally, since $F$ is of class $C^2$, (v) can be achieved by a suitable generic perturbation, see \cite{Abbondandolo:2006lk} (see also \cite{Asselle:2022}, where the
     case of strongly indefinite functionals is treated). 
\end{proof}

%%%%%%%%%%%%
%%%%%%%%%%%%

\bibliography{_biblio}
\bibliographystyle{plain}

\end{document}